\def\NAT@spacechar{~}
\theoremstyle{plain}
\newtheorem{thm}{Theorem}
\newtheorem{lem}[thm]{Lemma}
\newtheorem{claim}{Claim}
\crefname{lem}{Lemma}{Lemmas}
\crefname{thm}{Theorem}{Theorems}
\crefname{cor}{Corollary}{Corollaries}
\crefname{prop}{Proposition}{Propositions}
\crefname{conj}{Conjecture}{Conjectures}
\crefname{claim}{Claim}{Claims}
\newcommand{\arXiv}[1]{arXiv:\,\href{http://arxiv.org/abs/#1}{#1}}
\newcommand{\msn}[1]{MR:\,\href{http://www.ams.org/mathscinet-getitem?mr=MR#1}{#1}}
\newcommand{\doi}[1]{\href{https://dx.doi.org/#1}{\tt https://dx.doi.org/#1}}
\DeclarePairedDelimiter\ceil\lceil\rceil
\DeclarePairedDelimiter\floor\lfloor\rfloor
\renewcommand{\ge}{\geqslant}
\renewcommand{\le}{\leqslant}
\renewcommand{\geq}{\geqslant}
\renewcommand{\leq}{\leqslant}
\newcommand{\eps}{\varepsilon}
\begin{document}
\title[{A lower bound on the average degree forcing a minor}]{A lower bound on the\\ average degree forcing a minor}
\author{Sergey Norin}
\address[S. Norin]{Department of Mathematics and Statistics, McGill University, Montr\'eal, Canada} \email{snorin@math.mcgill.ca}
\thanks{Norin is supported by NSERC grant 418520. Wood is supported by the Australian Research Council.}
\author{Bruce Reed}
\address[B. Reed]{School of Computer Science, McGill University, Montr\'eal, Canada}
\email{breed@cs.mcgill.ca}
\author{Andrew Thomason}
\address[A. Thomason]{Department of Pure Mathematics and Mathematical Statistics, University of Cambridge, United Kingdom}
\email{A.G.Thomason@dpmms.cam.ac.uk}
\author{David R. Wood}
\address[D. R. Wood]{School of Mathematics, Monash University, Melbourne, Australia}
\email{david.wood@monash.edu}
\date{\today}

\begin{abstract} 
We show that for sufficiently large $d$ and for $t\geq d+1$,  there is a graph $G$ with average degree $(1-\eps)\lambda  t \sqrt{\ln d}$ such that almost every graph $H$ with $t$ vertices and average degree $d$ is not a minor of $G$, where $\lambda=0.63817\dots$ is an explicitly defined constant. This generalises analogous results for complete graphs by Thomason (2001) and for general dense graphs by Myers and Thomason (2005). It also shows that an upper bound for sparse graphs by Reed and Wood (2016) is best possible up to a constant factor. 
\end{abstract}

\maketitle

\section{Introduction}

\citet{Mader67} first proved that for every graph $H$, every graph with sufficiently large average degree contains $H$ as a minor\footnote{A graph $H$ is \emph{a minor} of a graph $G$ if a graph isomorphic to $H$ can be obtained from a subgraph of $G$ by contracting edges.}. The natural extremal question arises: what is the least average degree that forces $H$ as a minor? To formalise this question, let $f(H)$ be the infimum of all real numbers $d$ such that every graph with average degree at least $d$ contains $H$ as a minor. This value has been extensively studied for numerous graphs $H$, including small complete graphs \citep{Dirac64,Mader68,Jorgensen94,ST06,Song04}, the Petersen graph \citep{HW18a}, general complete graphs \citep{Mader68,Kostochka82,Kostochka84,delaVega,BCE80,Thomason84,Thomason01,Myers-CPC02}, complete bipartite graphs \citep{CRS11,Myers03,KP10,KO05,KP08,KP12},
general dense graphs \citep{MT-Comb05}, 
general sparse graphs \citep{ReedWood16,ReedWood16c,HarveyWood16}, 
disjoint unions of graphs \citep{Thom08,CLNWY17,KNQ19}, and
disjoint unions of cycles \citep{HarveyWood-Cycles}; see \citep{Thomason06} for a survey. 

For complete graphs $K_t$, the above question was asymptotically answered in the following theorem of  \citet{Thomason01}, where 
\begin{equation*}
\lambda := \max_{x >0} \frac{1-e^{-x}}{\sqrt{x}}=0.63817\ldots.
\end{equation*}

\begin{thm}[\citep{Thomason01}] 
\label{Thomason}
Every graph with average degree at least $(\lambda + o(1) )\, t\, \sqrt{\ln t}$ contains $K_t$ as a minor. Conversely, there is a graph with average degree at least $(\lambda + o(1) )\, t\, \sqrt{\ln t}$ that contains no $K_t$ minor. That is, 
\begin{equation*}
f(K_t) = (\lambda + o(1) )\, t\, \sqrt{\ln t}.
\end{equation*} 
\end{thm}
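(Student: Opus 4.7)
The statement has two halves: the existential upper bound $f(K_t) \le (\lambda+o(1))\,t\sqrt{\ln t}$ (every dense graph has a $K_t$ minor), and the extremal lower bound $f(K_t) \ge (\lambda-o(1))\,t\sqrt{\ln t}$ (a witnessing construction). I would attack them separately; the construction is the more concrete and I would write it first.

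For the \emph{lower bound}, I would choose a random graph $G = G(n,p)$ with $n$ and $p$ tuned so that $(n-1)p$ is as close to $\lambda\, t\sqrt{\ln t}$ as possible, subject to $\mathbb{E}[\#\{K_t\text{ minors of }G\}] = o(1)$. A $K_t$ minor is specified by $t$ disjoint connected branch sets $V_1,\dots,V_t \subseteq V(G)$ of sizes $s_1,\dots,s_t$ with at least one edge between every pair. Fixing the sizes and the partition, the probability that all $\binom{t}{2}$ inter-branch edges exist is $\prod_{i<j}\bigl(1-(1-p)^{s_i s_j}\bigr)$, and the number of such labelled partitions is bounded by a multinomial. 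Summing over all partitions and applying the first moment method, one finds that the dominant contribution comes from branch sets of essentially uniform size $s = n/t$. Setting $x = p s^2$, the key ratio to be kept negative is roughly
\begin{equation*}
\tfrac{t}{2}\ln\!\bigl(1-e^{-x}\bigr) + t\ln n - (\text{lower order terms}),
\end{equation*}
and optimising over $x$ is precisely the optimisation defining $\lambda$: one obtains $(n-1)p \le (\lambda-o(1))\,t\sqrt{\ln t}$ while retaining expected minor count $o(1)$. Markov plus a standard concentration statement on the degree sequence then yield a deterministic $G$ with the required average degree and no $K_t$ minor.

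For the \emph{upper bound}, I would follow Thomason's extremal-function approach. Write $c(t)$ for half the least average degree forcing a $K_t$ minor. Starting from a graph $G$ with average degree $(\lambda+\eps)\,t\sqrt{\ln t}$, iteratively pass to a minor to obtain a dense host $H$: either $H$ already has high enough minimum degree relative to $|V(H)|$ to contain $K_t$ directly, or we can contract edges to produce branch sets of balanced size while preserving average degree. Once we have a minor whose vertices may be regarded as ``super-vertices'' of size roughly $s$, apply a random partition of $V(H)$ into $t$ classes: the probability that two prescribed classes share no edge is approximately $(1-p)^{s^2} \approx e^{-x}$ with $x = ps^2$, and the choice of $x$ maximising $(1-e^{-x})/\sqrt{x}$ is exactly the one appearing in $\lambda$. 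Hence for $p$ beating the threshold, a union bound over the $\binom{t}{2}$ pairs shows that after deleting a negligible number of classes and redistributing vertices, one finds $t$ branch sets meeting pairwise.

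The main obstacle is the upper-bound direction, and specifically matching the constant $\lambda$ exactly. The random-partition argument readily delivers a $K_t$ minor when the average degree is a large constant times $t\sqrt{\ln t}$, but extracting the sharp constant requires the iterative minor-extraction step to be carried out so that the branch sets are essentially uniform in size and the density loss per iteration is controlled; it is here that the interplay between $s$, $p$, and $t$ must be handled delicately so that the threshold $x$ realising the maximum in the definition of $\lambda$ is attainable on both sides simultaneously.
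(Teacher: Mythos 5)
First, a point of reference: \cref{Thomason} is stated in this paper as a quoted result of \citet{Thomason01}; the paper contains no proof of it (its own machinery, \cref{TheLemma} and \cref{main}, recovers only the \emph{lower-bound} half, as the case $d=t-1$, $m=\binom{t}{2}$, and does so via a blow-up of a small random graph rather than a single random graph). So your proposal must be judged as a sketch of Thomason's theorem itself, and as such it has genuine gaps. In the lower-bound half the overall strategy (first moment over branch-set families in $G(n,p)$ with constant $p$) is indeed the classical route, but the bookkeeping is off in a way that matters: the number of candidate families is about $(t+1)^n$, so the quantity to keep negative is roughly $n\ln t+\binom{t}{2}\ln\bigl(1-(1-p)^{s^2}\bigr)$, not $\tfrac{t}{2}\ln(1-e^{-x})+t\ln n$; and at the optimum the per-pair non-adjacency probability $(1-p)^{s^2}$ is about $1/t$, \emph{not} $e^{-x^*}$. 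The maximiser $x^*$ in the definition of $\lambda$ enters as $x=\ln\frac{1}{1-p}$, i.e.\ it fixes the constant edge density $p=1-e^{-x}$, while $s\approx\sqrt{\ln t/x}$ supplies both the $\sqrt{\ln t}$ and the $1/\sqrt{x}$; your identification $x=ps^2$ with $(1-p)^{s^2}\approx e^{-x}$ conflates these two scales (with constant $p$ and $s\to\infty$ one has $(1-p)^{s^2}\to 0$), so "optimising over $x$" as you set it up does not produce $\lambda$.

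The more serious gap is the upper bound, which is the actual content of Thomason's theorem. "Iteratively contract to balanced super-vertices, then randomly partition and union-bound over the $\binom{t}{2}$ pairs" does not work as described: if the per-pair failure probability were a constant $e^{-x^*}$, a positive fraction of pairs would be non-adjacent and no repair by deleting "a negligible number of classes" is possible; to make the failure probability small enough one is forced to $s^2\ln\frac{1}{1-p}\approx\ln t$, and then the argument only succeeds if the host behaves pseudo-randomly at that scale, which an arbitrary graph of average degree $(\lambda+\eps)t\sqrt{\ln t}$ need not. Thomason's proof supplies exactly this missing step, by extracting a dense minor with strong pseudo-randomness properties and then building the branch sets inside it with a delicate analysis; your sketch names this as "the main obstacle" but offers no mechanism to overcome it. As it stands, the upper-bound half is a restatement of the difficulty rather than a proof, and even the lower-bound half needs the corrected exponent bookkeeping above (plus the standard convexity argument handling unbalanced branch sets) to be complete.
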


\citet{MT-Comb05} generalised this result for all families of dense graphs as follows\footnote{As is standard, we write that \emph{almost every} graph with $t$ vertices and average degree $d$ satisfies property $P$ if the probability that a random graph with $t$ vertices and average degree $d$ satisfies property $P$ tends to 1 as $t\to\infty$.}. 

\begin{thm}[\citep{MT-Comb05}]
\label{MyersThomason}
For every $\tau\in(0,1)$, for all $t$ and $d\geq t^\tau$, for almost every graph $H$ with $t$ vertices and average degree $d$ (and for every $d$-regular graph with $t$ vertices), 
\begin{equation*}
f(H) = ( \lambda + o(1)) \, t\, \sqrt{\ln d}.
\end{equation*}
\end{thm}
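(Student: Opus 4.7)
The plan is to prove the two halves of the equality $f(H)=(\lambda+o(1))\,t\sqrt{\ln d}$ separately, adapting the proof of \cref{Thomason} (the special case $d=t-1$, $H=K_t$). For the upper bound, I would generalise Thomason's branch-set construction. Thomason exhibits, in any graph $G$ of average degree at least $(\lambda+o(1))\,t\sqrt{\ln t}$, a family of $t$ vertex-disjoint connected subgraphs $B_1,\dots,B_t$ that are pairwise adjacent; since a random $H$ with $t$ vertices and average degree $d$ has only $dt/2\ll\binom{t}{2}$ edges, we can afford much less. First I would prove a lemma saying that $G$ with average degree $(\lambda+o(1))\,t\sqrt{\ln d}$ contains $t$ vertex-disjoint connected subgraphs such that, between a $(1-o(1))$-fraction of pairs, there is an edge of $G$. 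The proof would be a rerun of Thomason's density-increment argument with branch sets of size $O(\sqrt{\ln d})$ in place of $O(\sqrt{\ln t})$: each pair failing to be adjacent now costs $o(1/(dt))$ rather than $o(1/t^{2})$, which is the only modification. Then, under a uniformly random bijection $\pi\colon V(H)\to\{B_1,\dots,B_t\}$, the expected number of edges $uv\in E(H)$ not realised by a $G$-edge between $B_{\pi(u)}$ and $B_{\pi(v)}$ is $o(e(H))$; for almost every $H$ one can delete a small set of branch sets and vertices of $H$ to absorb these failures and yield an $H$-minor.

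For the lower bound, I would take $G$ to be a random graph $G(n,p)$ with parameters tuned so its average degree equals $(\lambda-\eps)\,t\sqrt{\ln d}$. For a candidate $H$-minor with branch sets of sizes $\beta_1,\dots,\beta_t$, the probability that each $B_i$ induces a connected subgraph and each of the $e(H)\approx dt/2$ prescribed inter-branch edges is present is bounded by $\prod_i p^{\beta_i-1}\cdot\prod_{uv\in E(H)}\bigl(1-(1-p)^{\beta_u\beta_v}\bigr)$. Multiplying by the count $\binom{n}{\beta_1,\dots,\beta_t}$ of ordered partitions and optimising over $(\beta_i)$ in a first-moment bound reduces to the variational problem defining $\lambda$, but with $\ln t$ replaced by $\ln d$ because only $e(H)$ (not $\binom{t}{2}$) edge-constraints enter the bound. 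Concentration of edge counts in random $H$ then shows, for almost every $H$, that no $H$-minor of $G$ exists.

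The main obstacle, I expect, is matching the constant $\lambda$ precisely in both directions. In the upper bound this requires a careful rerun of Thomason's delicate optimisation balancing branch-set size against the density of connecting edges, now parametrised by $d$ instead of $t-1$; in the lower bound it requires the variational optimisation over $(\beta_i)$ to factor through $\lambda$ exactly. A secondary difficulty is the hypothesis $d\ge t^{\tau}$, needed so that concentration of edge statistics in random $H$ survives a union bound over exponentially many branch-set families; the statement for every $d$-regular graph bypasses this by replacing probabilistic concentration with a quasirandomness edge-count argument, but the same variational problem (and hence the same constant $\lambda$) still controls the answer.
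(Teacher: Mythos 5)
The paper itself offers no proof of this statement---it is quoted from \citep{MT-Comb05}---so your sketch has to be judged against the original Myers--Thomason argument, whose overall architecture (upper bound by generalising Thomason's branch-set construction \citep{Thomason01}, lower bound by a first-moment calculation over branch-set partitions of a constant-density random host, with $\ln t=\Theta(\ln d)$ because $d\ge t^\tau$) your outline does broadly follow.

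The serious gap is in your upper bound. The constant $\lambda$ forces branch sets of size $\ell\approx\sqrt{\log_b d}$ inside a pseudo-random piece of density $p=1-b^{-1}$, and then the probability that a fixed pair of branch sets is non-adjacent is about $b^{-\ell^2}\approx d^{-1}$: already for $H=K_t$ this is roughly $1/t$, not $o(1/t^2)$, and in general it is nowhere near the $o(1/(dt))$ that you yourself identify as what a random-bijection expectation argument needs (note also that this requirement is much stronger than your stated lemma that a $(1-o(1))$-fraction of pairs be adjacent). So the claim that the only modification to Thomason's argument is the per-pair cost is not tenable: at the constant $\lambda$ the expected number of unrealised edges of $H$ is polynomially large, and both Thomason's proof and its Myers--Thomason extension must \emph{repair} these failed adjacencies by growing or re-routing branch sets through spare vertices; that repair, and making it compatible with the degree sequence of $H$, is the heart of the proof, not a routine rerun. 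Your proposed absorption---deleting some branch sets and vertices of $H$---does not yield an $H$-minor at all (it yields a minor of a proper subgraph of $H$), and it is in any case unavailable for the ``every $d$-regular graph'' half of the statement, where $H$ carries no randomness. The lower-bound half of your sketch is essentially sound in this regime---random hosts are extremal precisely because $d\ge t^\tau$, which is why the present paper must switch to blow-ups when $d\ll t$---but you should decouple the two sources of randomness: either fix a host graph with the required non-adjacency property deterministically (as this paper does in its own lemmas) or condition on almost-sure properties of $H$, before running the union bound over the $\exp(\Theta(n\log n))$ branch-set partitions.
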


\cref{MyersThomason} determines $f(H)$ for most dense graphs $H$ with $d\geq t^\tau$, but says nothing for sparse graphs $H$, where $d$ can be much smaller than $t$. In this regime, \citet{ReedWood16,ReedWood16c} established the following upper bound on  $f(H)$.

\begin{thm}[\citep{ReedWood16,ReedWood16c}]
\label{ReedWood}
For sufficiently large $d$, and for every graph $H$ with $t$ vertices and average degree $d$, every graph with average degree at least $3.895 \,t\,\sqrt{\ln d}$ contains $H$ as a minor.  That is, 
\begin{equation*}
f(H) \leq 3.895\, t\,\sqrt{\ln d}.
\end{equation*}
\end{thm}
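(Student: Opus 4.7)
The plan is to follow a standard two-phase template for minor-containment proofs. First, inside any graph $G$ of average degree at least $3.895\,t\sqrt{\ln d}$, extract a \emph{pseudo-random $K_t$-minor}: $t$ disjoint connected bags $B_1,\dots,B_t\subseteq V(G)$ such that at most $\eps\binom{t}{2}$ pairs $\{i,j\}$ carry no $G$-edge between $B_i$ and $B_j$, for some $\eps=O(1/d)$. Second, embed $H$ into this model via a uniformly random bijection $\pi\colon V(H)\to\{B_1,\dots,B_t\}$, exploiting the sparsity $|E(H)|=td/2$ to beat the small defect $\eps$.

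For Phase~1, pass to a subgraph $G'\subseteq G$ of minimum degree $\Omega(t\sqrt{\ln d})$ by the usual average-to-minimum degree reduction. Then build the bags iteratively by a ball-growing procedure in the spirit of Thomason: at each step, pick an unused vertex and grow a BFS ball in $G'$, stopping once the ball has sent enough edges into the remaining part. A double-counting argument shows that $t$ bags of order $O(\sqrt{\ln d})$ can be grown disjointly, and that all but an $\eps$-fraction of the $\binom{t}{2}$ bag-pairs carry a $G$-edge. The $\sqrt{\ln d}$ (rather than $\sqrt{\ln t}$) scale arises because each bag only needs to ``catch'' a neighbour outside itself with probability $1-O(1/d)$, not $1$, trading a logarithmic factor for a weaker density target.

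For Phase~2, observe that for a random bijection $\pi$, each fixed edge $uv\in E(H)$ is assigned to a non-adjacent pair of bags with probability at most $2\eps=O(1/d)$. Since $|E(H)|=td/2$, the expected number of ``bad'' edges is at most $O(t)$; a slightly stronger choice of $\eps$, combined with a first-moment or deletion argument, produces a bijection $\pi$ with \emph{no} bad edges. Contracting each bag then realises $H$ as a minor of $G$.

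The main obstacle is the quantitative calibration of Phase~1 that forces the constant $3.895$. One must simultaneously balance the bag sizes, the number of bags $t$, and the non-edge fraction $\eps$, starting from a fixed average degree. The gap between Thomason's $\lambda\approx 0.638$ (enough to guarantee \emph{some} $K_t$-minor) and the $3.895$ here measures the overhead of strengthening ``$K_t$-minor'' to ``pseudo-random $K_t$-minor with $\eps=O(1/d)$'': the stronger property is what survives the union bound over the $td/2$ edges of $H$, but it is precisely what inflates the constant. Executing the iterative extraction so that this stronger property holds uniformly over all sparse $H$, with provable constants, is the technical heart of the argument -- and the main theorem of the present paper is, in effect, that this extra factor-of-constant overhead cannot be avoided.
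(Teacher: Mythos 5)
You should first note that this paper contains no proof of \cref{ReedWood}: it is quoted from Reed and Wood \citep{ReedWood16,ReedWood16c}, so your sketch has to stand on its own, and as written it has a fatal gap in Phase~2. A minor model must realise \emph{every} edge of $H$, so there is no ``deletion'' option: you need an assignment with \emph{zero} bad edges. With defect $\eps=O(1/d)$ the expected number of bad edges under a uniformly random bijection is $\eps\cdot td/2=\Theta(t)$, so the first moment gives nothing, and pushing the expectation below $1$ requires $\eps<2/(td)$ --- stronger by a factor of $t$, not ``slightly'' stronger. No Thomason-style ball-growing argument with bags of order $O(\sqrt{\ln d})$ in a host of density $O(t\sqrt{\ln d})$ can produce a defect that decays with $t$; it gives at best a defect polynomial in $1/d$. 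Worse, such a defect is unavoidable for some hosts: a blowup of a random graph exactly as in this paper, with the base graph taken on $d^{C}$ vertices for a suitable constant $C$, has average degree above $3.895\,t\sqrt{\ln d}$ (since $\lambda\sqrt{C}>3.895$ for large $C$) and, by the argument of \cref{NewRandomGraph} and \cref{c:good}, \emph{every} family of $t$ disjoint connected branch sets in it has at least $t^{2}d^{-O(1)}$ non-adjacent pairs. Once $t$ is a large power of $d$ (allowed, since the theorem covers all $t\ge d+1$), the expected number of bad edges is then $\gg 1$ for every choice of bag family, so the fixed-bags-plus-uniform-random-bijection scheme cannot prove the theorem for these hosts even though they do contain every such $H$ as a minor. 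The missing idea is adaptivity: the branch sets and the assignment must be built with reference to $H$ itself (in particular with branch-set sizes reflecting $H$'s degree sequence), which is how Reed and Wood actually argue.

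Two further problems. First, you use nothing about $H$ beyond $|E(H)|=td/2$, but the theorem quantifies over \emph{every} $H$ of average degree $d$, including graphs with cliques of order about $\sqrt{td}$ and highly skewed degrees; a uniform bijection onto $t$ equal-sized bags is the wrong embedding for such $H$, and your Phase~1 claim that ``a double-counting argument shows'' defect $O(1/d)$ at density $3.895\,t\sqrt{\ln d}$ is precisely the unproved calibration on which everything rests. Second, your closing assertion --- that the present paper shows the overhead between $\lambda\approx0.638$ and $3.895$ cannot be avoided --- is backwards: the paper proves only the lower bound $(1-\eps)\lambda\,t\sqrt{\ln d}$, and Thomason and Wales \citep{TW19} have since shown $f(H)\le(\lambda+o(1))\,t\sqrt{\ln d}$, so no constant-factor overhead is in fact necessary.
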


The purpose of this paper is to show that this result is best possible up to a constant factor. Indeed, we precisely match the lower bounds in the work of \citet{Thomason01} and \citet{MT-Comb05}, strengthening the lower bound in \cref{MyersThomason} by eliminating the assumption that $d\geq t^\tau$. Informally, we prove that if $d$ is large, then almost every $H$ with $t \geq d+1$ vertices and average $d$ satisfies 
$f(H) \ge (1 -\eps)\lambda\, t\, \sqrt{\ln d}$. To state the result precisely, let $\mathcal{G}(t,m)$ be the space of random graphs with vertex-set $\{1,2,\dots,t\}$ and $m$ edges. Thus $\mathcal{G}(t,td/2)$ is the space of random graphs with $t$ vertices and average degree $d$.


\begin{thm}
	\label{main} 
	For every $\eps,c \in (0,1)$ there exists $d_0$  such that for every integer $d\geq d_0$ and for every integer $t\geq d+1$,  there is a graph $G$ with average degree at least $(1-\eps)\lambda\, t\,\sqrt{\ln d}$ such that if $H \in \mathcal{G}(t,td/2)$ then $\mathbb{P}(H \text{ is a minor of }G) < c^t$, and in particular, $\mathbb{P}(f(H) \geq (1-\eps)\lambda\, t\,\sqrt{\ln d}) > 1-c^t$.
\end{thm}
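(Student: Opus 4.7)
The plan is to use the probabilistic method. Take $\tilde G \sim G(n,p)$ with $n$ and $p$ tuned so that the target branch-set size $s := n/t$ and the density parameter $x := p s^{2}$ sit near the maximiser $x^{\star}$ of $(1-e^{-x})/\sqrt{x}$; the choice is arranged so that $np$ slightly exceeds $(1-\eps)\lambda t \sqrt{\ln d}$. I would then show
\begin{equation*}
\mathbb{E}_{\tilde G}\bigl[\mathbb{P}_H(H \text{ is a minor of } \tilde G)\bigr] < c^{t}/2,
\end{equation*}
while a Chernoff estimate on $|E(\tilde G)|$ gives average degree $\ge (1-\eps)\lambda t \sqrt{\ln d}$ with probability $> 1/2$. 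A union bound / averaging argument then extracts a single $G$ in the support of $\tilde G$ satisfying both conclusions of the theorem.

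At the heart of the proof is a first-moment calculation. A minor model of $H$ in $G$ is specified by a \emph{layout} $\pi = (B_1, \ldots, B_t)$ of pairwise disjoint vertex sets in $V(G)$ each inducing a connected subgraph; writing $E_\pi := \{uv : G \text{ has an edge between } B_u \text{ and } B_v\}$, the graph $H$ is a minor of $G$ iff $E(H) \subseteq E_\pi$ for some layout. Since $H \in \mathcal{G}(t, td/2)$ is a uniformly random edge-set of size $m := td/2$, a union bound yields $\mathbb{P}_H(H \text{ is a minor of } G) \le \sum_{\pi}\binom{|E_{\pi}|}{m}\big/\binom{\binom{t}{2}}{m}$. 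Passing to the expectation over $\tilde G$, the independence of intra-branch edges (controlling whether each $B_i$ is connected) from inter-branch edges (controlling $E_\pi$), combined with the factorial-moment identity $\mathbb{E}\bigl[\binom{X}{m}/\binom{N}{m}\bigr] \le \bar q^{m}$ for a sum $X$ of $N$ independent Bernoulli indicators of average mean $\bar q$ (via Maclaurin's inequality applied to the elementary symmetric polynomials of the non-identical $q_{uv}$'s), together with the spanning-tree bound $\mathbb{P}(B_i \text{ connected}) \le |B_i|^{|B_i|-2}\, p^{|B_i|-1}$, yields
\begin{equation*}
\mathbb{E}_{\tilde G}\,\mathbb{P}_H(H \text{ is a minor of }\tilde G) \;\le\; \sum_{s_1, \ldots, s_t}\binom{n}{s_1, \ldots, s_t}\,\prod_{i} s_i^{s_i-2}\, p^{s_i-1}\,\bar q(s_1, \ldots, s_t)^{m},
\end{equation*}
where $\bar q$ is the average of $1-(1-p)^{s_u s_v}$ over pairs $u < v$.

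Convexity identifies the balanced vector $s_i \equiv s$ as dominant, reducing matters to verifying an inequality of the form
\begin{equation*}
s\ln(np) + (d/2)\ln(1 - e^{-x}) < \ln c - o(1).
\end{equation*}
Substituting $s = x/((1-\eps)\lambda \sqrt{\ln d})$ forced by the average-degree target then converts the problem into a one-parameter optimisation whose Lagrangian is precisely $(1-e^{-x})/\sqrt{x}$, and picking $x$ close to $x^{\star}$ recovers the constant $\lambda$. The main obstacle is the regime $t \gg d$ that lies outside the scope of \cref{MyersThomason} (which assumes $d \ge t^{\tau}$): there $\ln(np) \sim \ln t$ can be much larger than $\ln d$, so the first term is substantially larger than in the dense case, and one must exploit the sparseness of $H$---the exponent $m = td/2$ is much smaller than $\binom{t}{2}$, so the factor $(1 - e^{-x})^{m}$ is the only suppression available---to restore the bound. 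Making the balanced-case dominance rigorous by controlling the tail of the sum over unbalanced size-vectors (e.g.\ via a Chernoff or entropy estimate ruling out concentrated layouts) and dealing with the rounding to integer $s$ and $n$ are where the bulk of the technical work resides.
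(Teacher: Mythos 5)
Your plan is essentially the Myers--Thomason first-moment calculation over a plain random host $G(n,p)$, and it breaks down exactly in the regime that is the whole content of \cref{main}: the theorem must hold for \emph{every} $t\geq d+1$ with $d_0$ depending only on $\eps,c$, so $t$ may be arbitrarily large (even doubly exponential) in $d$. In your union bound the number of layouts is of order $t^{n}$ (each of the $n\geq t$ host vertices assigned to one of $t$ branch sets or to none), contributing roughly $n\ln t\gtrsim (1-\eps)\lambda\, t\sqrt{\ln d}\,\ln t$ to the exponent, while the only available suppression is $(1-e^{-x})^{m}$ with $m=td/2$, i.e.\ at most about $\exp(-\tfrac12 t d\, e^{-x})$. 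The average-degree requirement forces $ps\geq(1-\eps)\lambda\sqrt{\ln d}$, hence $x\geq s^{2}\ln\frac{1}{1-p}\geq ((1-\eps)\lambda)^{2}\ln d$, so the suppression exponent is at most $\tfrac12 t\, d^{\,1-((1-\eps)\lambda)^{2}}$. Once $\ln t\gg d^{\,1-((1-\eps)\lambda)^{2}}$ the counting term dominates and the first moment diverges; and this is not merely a failure of the method, since (as the paper notes via Alon--F\"uredi) a constant-density random graph on $\geq t$ vertices really does contain almost every sparse $H$ on $t$ vertices as a (spanning) subgraph when $d\lesssim\log t$, so no refinement of the expectation calculation over $G(n,p)$ can rescue the construction. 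Your remark that ``one must exploit the sparseness of $H$ to restore the bound'' is exactly the factor $(1-e^{-x})^{m}$ already accounted for above, and it is insufficient.

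The missing idea is to change the host graph. The paper takes $G$ to be a blowup of a random graph $G_0$ on only $d$ vertices: each vertex of $G_0$ becomes an independent set of size $r=\lceil(1-\tfrac{\eps}{2})t\ell/d\rceil$ with $\ell=\sqrt{\alpha\log_b d}$, and edges become complete bipartite graphs. Branch sets are then recorded only through their projections to $V(G_0)$ (``blobbings''), and the number of blobbings is at most $(4d)^{t\ell}$ --- a per-vertex entropy of $O(\ell\ln d)$ that does not grow with $t$, in contrast to the $\ln t$ per host vertex in your count. A deterministic dispersion property of $G_0$ (\cref{NewRandomGraph}: every family of $s=\lceil d^{\beta}\rceil$ disjoint $\ell$-sets has many non-adjacent pairs, provable by a union bound precisely because $G_0$ has only $d$ vertices) yields that \emph{every} blobbing has at least $\tfrac{\eps^{2}}{400}d^{-\alpha}t^{2}$ good pairs, so a uniformly random $H$ with $td/2$ edges is compatible with a fixed blobbing with probability at most $\exp(-\tfrac{\eps^{2}}{400}td^{1-\alpha})$, which beats $(4d)^{t\ell}$ uniformly in $t$. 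Note also that the paper's randomness enters only through $H$ (after fixing a good $G_0$), whereas you average over both $H$ and the host; that is a minor stylistic difference, and your secondary steps (Maclaurin averaging, balancedness of the dominant size vector) are plausible but moot, since the plain random host cannot give the theorem for all $t\geq d+1$.
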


Note that in the proofs of \cref{Thomason,MyersThomason} the host graph  $G$ is a random graph of appropriately chosen constant density. Indeed, every such extremal graph is essentially a disjoint union of pseudo-random graphs \citep{Myers-CPC02,MT-Comb05}. However, random graphs themselves are not extremal when $d$  is small compared to $t$. Indeed, \citet{AF92} showed that if $d\le  \log_2 t$ then, for every  graph $H$ with $t$ vertices and maximum degree  $d$, a random graph on $t$ vertices (with edge probability $\frac12$) will  almost certainly contain a \emph{spanning} copy of $H$. To prove \cref{main},   we take $G$ to be a blowup of a suitably chosen small random  graph.   Note that \citet{Fox11} also considers minors of blowups of random graphs.   On the face of it, such blowups might   not to be pseudo-random, thus contradicting the fact that in many  cases the extremal graphs are known to be pseudo-random. But the notion  of pseudo-randomness involved is weak, asserting only that induced  subgraphs of constant proportion have roughly the same density, and the  blowups used here have this property.
  
Note that \citet{ReedWood16} claimed that a lower bound analogous to \cref{main} followed from the work of  \citet{MT-Comb05}.  However, this claim is invalid. The error occurs in the footnote on page~302 of \citep{ReedWood16}, where Theorem~4.8 and Corollary~4.9 of \citet{MT-Comb05} are applied. The assumptions in  these results mean that they are only applicable if the average degree of $H$ is at least $|V(H)|^\eps$ for some fixed $\eps>0$, which is not the case here. Also note that \citet{ReedWood16} claimed that a $ct\sqrt{\log d}$ lower bound holds for every $d$-regular graph (also as a corollary of the work of \citet{MT-Comb05}). This is false, for example, when $H$ is the $d$-dimensional hypercube \citep{HNW}.


\section{The Proof}

We will need the following Chernoff Bound.

\begin{lem}[\citep{MU05}]
\label{Chernoff}
Let $X_1,X_2,\dots,X_n$ be independent random variables, where each $X_i=1$ with probability $p$ and $X_i=0$ with probability $1-p$. Let $X := \sum_{i=1}^n X_i$. Then for $\delta\in(0,1)$, 
\begin{equation*}
\mathbb{P}( X \leq (1 - \delta)pn )  \leq \exp(-\tfrac{\delta^2}{2} pn).
\end{equation*}
\end{lem}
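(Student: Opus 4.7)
The plan is to use the standard Cram\'er--Chernoff exponential moment method. For any parameter $t>0$, applying Markov's inequality to the nonnegative random variable $e^{-tX}$ gives
\begin{equation*}
\mathbb{P}\bigl(X \leq (1-\delta)pn\bigr) = \mathbb{P}\bigl(e^{-tX} \geq e^{-t(1-\delta)pn}\bigr) \leq e^{t(1-\delta)pn}\,\mathbb{E}[e^{-tX}].
\end{equation*}
Since the $X_i$ are independent Bernoulli$(p)$ variables, the moment generating function factorises; using the elementary bound $1-x \leq e^{-x}$ I would estimate
\begin{equation*}
\mathbb{E}[e^{-tX}] = \prod_{i=1}^n \bigl(1-p+pe^{-t}\bigr) = \bigl(1 - p(1-e^{-t})\bigr)^n \leq \exp\bigl(-np(1-e^{-t})\bigr).
\end{equation*}

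Combining the two estimates reduces the problem to minimising the exponent $\varphi(t) := t(1-\delta)pn - np(1-e^{-t})$ over $t>0$. Differentiating in $t$, the unique stationary point is $t^\star = \ln\tfrac{1}{1-\delta}$, which is positive because $\delta\in(0,1)$; substituting this choice yields the classical sharp form of the lower tail bound
\begin{equation*}
\mathbb{P}\bigl(X \leq (1-\delta)pn\bigr) \leq \exp\bigl(-pn\bigl[(1-\delta)\ln(1-\delta) + \delta\bigr]\bigr).
\end{equation*}

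To reach the stated weaker form, the final step is the calculus inequality $(1-\delta)\ln(1-\delta) + \delta \geq \tfrac{\delta^2}{2}$ on $(0,1)$. Setting $f(\delta) := (1-\delta)\ln(1-\delta) + \delta - \tfrac{\delta^2}{2}$, I would verify $f(0)=0$, $f'(0)=0$, and $f''(\delta) = \delta/(1-\delta) \geq 0$ on $[0,1)$, whence $f \geq 0$ throughout. Substituting this into the exponent gives the claimed bound. There is no real obstacle, as the derivation is entirely textbook; the only mildly delicate point is the final calculus inequality. As an alternative one can bypass the explicit optimisation by using $1 - e^{-t} \geq t - t^2/2$ inside $\varphi(t)$ and then choosing $t = \delta$, which directly produces the exponent $-\tfrac{\delta^2}{2}pn$ in a single step.
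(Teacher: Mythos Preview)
Your argument is correct: the exponential Markov bound, the factorisation of the moment generating function, the optimisation at $t^\star=\ln\frac{1}{1-\delta}$, and the calculus verification that $(1-\delta)\ln(1-\delta)+\delta\ge\delta^2/2$ on $(0,1)$ are all valid, and your alternative shortcut via $1-e^{-t}\ge t-t^2/2$ with $t=\delta$ also works.

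There is nothing to compare against, however: the paper does not prove this lemma at all but simply quotes it from \citep{MU05} as a standard Chernoff bound. Your write-up is exactly the textbook derivation one would find there, so it is consistent with the paper's intent of treating the inequality as a black box.
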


Let $G$ be a graph. For $\ell\in\mathbb{R}^+$, a non-empty set of at most $\ell$ vertices in $G$ is called an \emph{$\ell$-set}. Two sets $A$ and $B$ of vertices in $G$ are \emph{non-adjacent} if there is no edge in $G$ between $A$ and $B$. 

Our first lemma gives properties about a random graph. 

\begin{lem}
\label{NewRandomGraph}
Fix $p,\eps,\alpha\in(0,1)$ and $\beta\in(\alpha,1)$, and let $b:=(1-p)^{-1}$. Then there exists $d_0$ such that for every integer $d\geq d_0$, if $s:=\ceil{d^{\beta}}$ and $\ell := \sqrt{\alpha\log_b d}$, then there exists a graph $G$ with exactly $d$ vertices and more than $(\frac12 -\eps)pd^2$ edges, such that for every set $S$ of $s$ pairwise disjoint $\ell$-sets in $G$, more than $\frac12 d^{-\alpha} \binom{s}{2}$ pairs of $\ell$-sets in $S$ are non-adjacent.
\end{lem}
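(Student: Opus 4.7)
The plan is to let $G$ be the binomial random graph on vertex set $\{1,\dots,d\}$ in which each of the $\binom{d}{2}$ possible edges is present independently with probability $p$. I will show that with positive probability such a $G$ simultaneously has more than $(\tfrac12-\eps)pd^2$ edges and the required non-adjacency property, thereby proving existence.

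The edge-count property is immediate from \cref{Chernoff}: $|E(G)|$ is a sum of $\binom{d}{2}$ independent Bernoulli$(p)$ variables with mean $p\binom{d}{2}$, so the Chernoff bound applied with a suitable small $\delta$ gives $|E(G)| > (\tfrac12-\eps)pd^2$ except on an event of probability at most $\exp(-\Theta(d^2))$.

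The heart of the argument is a Chernoff-plus-union-bound estimate for the non-adjacency property. Fix a collection $S = \{A_1,\dots,A_s\}$ of pairwise disjoint $\ell$-sets, and for $1 \le i < j \le s$ let $Z_{ij}$ be the indicator that $A_i$ and $A_j$ are non-adjacent in $G$. The crucial observation is that the $Z_{ij}$ are \emph{mutually independent}: each $Z_{ij}$ depends only on those edges of $K_d$ with one endpoint in $A_i$ and the other in $A_j$, and since the $A_k$ are pairwise disjoint, these edge sets are pairwise disjoint. Moreover, since $|A_i||A_j| \le \ell^2 = \alpha \log_b d$,
$$\mathbb{P}(Z_{ij}=1) = (1-p)^{|A_i||A_j|} \ge (1-p)^{\ell^2} = b^{-\alpha \log_b d} = d^{-\alpha}.$$
Hence by stochastic domination $M(S) := \sum_{i<j}Z_{ij}$ dominates a sum of $\binom{s}{2}$ i.i.d.\ Bernoulli$(d^{-\alpha})$ random variables; applying \cref{Chernoff} with $\delta = 1/2$ yields
$$\mathbb{P}\bigl(M(S) \le \tfrac12 d^{-\alpha}\tbinom{s}{2}\bigr) \le \exp\!\bigl(-\tfrac{1}{8}\,d^{-\alpha}\tbinom{s}{2}\bigr) \le \exp(-c\, d^{2\beta - \alpha})$$
for some absolute constant $c > 0$, using $s = \ceil{d^\beta}$.

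To finish, I union-bound over all choices of $S$. The number of $\ell$-sets in $\{1,\dots,d\}$ is at most $2d^\ell$ for large $d$, so the number of candidate $S$ is at most $(2d^\ell)^s$, whose logarithm is of order $s\ell\log d = \Theta(d^\beta (\log d)^{3/2})$. The hypothesis $\beta > \alpha$ gives $2\beta-\alpha > \beta$, so $c\,d^{2\beta-\alpha}$ dominates $d^\beta(\log d)^{3/2}$ for large $d$, making the union-bound probability tend to $0$. Combined with the edge-count event, a graph $G$ with both properties exists for all sufficiently large $d$. The main obstacle was dealing with dependencies between non-adjacency events for different pairs of $\ell$-sets; the observation that pairwise disjointness of the $A_k$ makes the relevant edge sets pairwise disjoint shows these events are in fact independent, and the hypothesis $\beta > \alpha$ is precisely what is needed for Chernoff to beat the union bound.
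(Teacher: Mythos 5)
Your proposal is correct and follows essentially the same route as the paper: take $G(d,p)$, use Chernoff for the edge count, observe that for pairwise disjoint $\ell$-sets the non-adjacency indicators depend on disjoint edge sets (hence are independent) so Chernoff applies, and beat the union bound over all collections $S$ using $2\beta-\alpha>\beta$. Your explicit stochastic-domination step to reduce to identical Bernoulli$(d^{-\alpha})$ variables is a slightly more careful rendering of what the paper does implicitly.
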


\begin{proof}
Let $G$ be a graph on $d$ vertices, where each edge is chosen independently at random with probability $p$. By \cref{Chernoff}, the probability that 
$ |E(G)| \leq  (\frac12-\eps)pd^2$ is less than $\frac12$.

If $A$ and $B$ are disjoint $\ell$-sets, then the probability that $A$ and $B$ are non-adjacent equals 
$(1-p)^{|A||B|} \geq (1-p)^{\ell^2} = d^{-\alpha}$. 
Consider a set $S$ of $s$ pairwise disjoint $\ell$-sets in $G$. 
Let $X_S$ be the number of pairs of elements of $S$ that are non-adjacent. 
Since the elements of $X_S$ are pairwise disjoint, \cref{Chernoff} is applicable and implies that 
the probability that $X_S \leq \frac12  d^{-\alpha} \binom{s}{2}$ is at most 
$\exp(- \frac{1}{8} d^{-\alpha} \binom{s}{2} ) \leq 
\exp(- \frac{1}{16} d^{\beta-\alpha} (s-1) )$, which is at most $\frac{1}{2} (2d^\ell)^{-s}$ since $d$ is sufficiently large. 


The number of $\ell$-sets is $\sum_{i=1}^\ell\binom{d}{i} \leq 2d^\ell$. 
Thus the number of sets of $s$ pairwise disjoint $\ell$-sets is at most $\binom{2d^\ell}{s}\leq (2d^\ell)^s$. 
By the union bound, the probability that $X_S \leq \frac12 d^{-\alpha} \binom{s}{2}$, for some set $S$ of $s$ pairwise disjoint $\ell$-sets, is less than $\frac{1}{2}$. 

Hence with positive probability, $|E(G)|> (\frac12-\eps)pd^2$ edges, and $X_S > \frac12  d^{-\alpha} \binom{s}{2}$ for every set $S$ of $s$ pairwise disjoint $\ell$-sets. The result follows.
\end{proof}

The next lemma is the heart of our proof. 


\begin{lem}
	\label{TheLemma} 
	Fix $p,\eps,\alpha,c\in(0,1)$ and let $b:=(1-p)^{-1}$. Then there exists $d_0$ such that for every integer $d\geq d_0$ and for every integer $t\geq d+1$,  there is a graph $G$ with average degree at least $(1 -\eps)p t \sqrt{\alpha \log_b d}$ such that 
	such that if $H\in \mathcal{G}(t,td/2)$ then 
	$\mathbb{P}(H\text{ is a minor of }G) < c^t$.
\end{lem}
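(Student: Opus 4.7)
Proof proposal.

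The plan is to take $G$ as a $k$-blowup $G_0[k]$ of the small random graph $G_0$ from Lemma~\ref{NewRandomGraph}, and to bound $\mathbb{P}(H\text{ is a minor of }G)$ via a first-moment union bound over possible branch-set projections to $V(G_0)$. I would first choose auxiliary parameters $\beta\in(\alpha,1)$ and $\eps_1\in(0,\eps/4)$ and invoke Lemma~\ref{NewRandomGraph} with $(p,\eps_1,\alpha,\beta)$ to obtain $G_0$ on $d$ vertices with $|E(G_0)|\geq(\tfrac12-\eps_1)pd^2$ and the stated non-adjacency property for families of $s=\lceil d^\beta\rceil$ pairwise disjoint $\ell$-sets, where $\ell=\sqrt{\alpha\log_b d}$. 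Set $k:=\lceil t\ell/d\rceil$ and $G:=G_0[k]$: each vertex $w$ of $G_0$ is replaced by an independent set $V_w$ of size $k$, with a complete bipartite graph between $V_{w_1}$ and $V_{w_2}$ whenever $w_1w_2\in E(G_0)$. Then $|V(G)|=kd$ and $|E(G)|=k^2|E(G_0)|$, so the average degree of $G$ is at least $(1-2\eps_1)pkd\geq(1-\eps)pt\ell$ for $\eps_1$ small enough.

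If $H$ has a minor model $\{B_v\}_{v\in V(H)}$ in $G$, define the projection $\phi(v):=\{w\in V(G_0):B_v\cap V_w\neq\emptyset\}$. Since the blowup's edges only run between adjacent blobs, each $\phi(v)$ is a non-empty connected subset of $V(G_0)$; disjointness of the $B_v$ forces $|\{v:w\in\phi(v)\}|\leq k$ for every $w$, hence $\sum_v|\phi(v)|\leq kd$; and every edge $uv\in E(H)$ demands an edge in $G_0$ between $\phi(u)$ and $\phi(v)$. Let $A(\phi)$ be this set of ``adjacent pairs'' and $F(\phi):=\binom{t}{2}-|A(\phi)|$. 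Then
\[
\mathbb{P}(H\text{ is a minor of }G)\;\leq\;\sum_\phi \mathbb{P}(E(H)\subseteq A(\phi))\;\leq\;\sum_\phi\bigl(1-F(\phi)/\tbinom{t}{2}\bigr)^{td/2},
\]
with the sum ranging over valid projections $\phi$, and a bipartite-incidence count (at most $kd$ edges in a $V(H)\times V(G_0)$ bipartite graph) bounds $\log\#\phi=O(t\ell\log d)=O(t(\log d)^{3/2})$.

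The crux is a uniform lower bound $F(\phi)\geq\Omega(t^2 d^{-\alpha})$ for every admissible $\phi$: this yields $(1-F(\phi)/\binom{t}{2})^{td/2}\leq\exp(-\Omega(td^{1-\alpha}))$, which dominates the overhead $e^{O(t(\log d)^{3/2})}$ for $d$ sufficiently large (as $d^{1-\alpha}\gg(\log d)^{3/2}$ when $\alpha<1$), giving the desired $<c^t$ bound uniformly in $t\geq d+1$. For this, apply Markov's inequality to $\sum_v|\phi(v)|$ to extract a subset $T\subseteq V(H)$ with $|T|=\Omega(t)$ and $|\phi(v)|\leq(1+O(\eps))\ell$ on $T$; greedy-colour the ``conflict graph'' on $T$ (edges when $\phi(u)\cap\phi(v)\neq\emptyset$, max degree $O(\ell k)$) to partition $T$ into $O(\ell k)$ color classes of pairwise disjoint, almost-$\ell$-sized projections, and apply Lemma~\ref{NewRandomGraph} (with $\alpha$ slightly perturbed to absorb the enlarged size threshold) together with the averaging identity $\binom{m}{s}/\binom{m-2}{s-2}=m(m-1)/(s(s-1))$ to upgrade its $\tfrac12 d^{-\alpha}\binom{s}{2}$ guarantee to $\tfrac12 d^{-\alpha}\binom{m}{2}$ non-adjacent pairs inside each class of size $m\geq s$.

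The main obstacle will be ensuring this lower bound on $F(\phi)$ is quadratic, not merely linear, in $t$: the naive disjoint-class argument outlined above only gives $F(\phi)=\Omega(td^{1-\alpha}/\log d)$, which closes the union bound for $t$ up to roughly $d^{2-\alpha}/\mathrm{polylog}(d)$. To extend to all $t\geq d+1$ one must either harvest additional non-adjacent pairs across color classes (exploiting that branches sharing a blob are automatically non-adjacent in $G$ and that projections are connected in $G_0$), or stratify by $N:=\sum_v|\phi(v)|$: projections with $N$ close to $t$ are essentially atomic and the non-adjacency fraction then stabilises near the constant $1-p$, while projections with larger $N$ are correspondingly rarer. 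Balancing the parameters $\alpha,\beta,\eps_1$ and stitching these cases together is the delicate technical heart of the proof.
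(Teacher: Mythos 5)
Your overall architecture is exactly the paper's: blow up the random graph $G_0$ of \cref{NewRandomGraph} by a factor $\approx t\ell/d$, project branch sets to $V(G_0)$, bound the number of projections by $e^{O(t\ell\log d)}$ (the paper's \cref{c:blob} gives $(4d)^{t\ell}$), and union-bound using the fact that every edge of a random $H$ must land on an adjacent pair of projected blobs. You have also correctly identified the crux: a lower bound of order $t^2d^{-\alpha}$ on the number of forbidden (non-adjacent) pairs that holds uniformly for \emph{every} admissible projection, for \emph{all} $t\geq d+1$.

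But that crux is precisely where your proposal stops short, and this is a genuine gap rather than a routine technicality. Your greedy-colouring scheme (partition $\Omega(t)$ small blobs into $O(\ell k)$ pairwise-disjoint classes and apply \cref{NewRandomGraph} inside each class) provably yields only $\Omega(td^{1-\alpha}/\log d)$ non-adjacent pairs, as you yourself compute, which closes the union bound only for $t\lesssim d^{2-\alpha}/\mathrm{polylog}(d)$; the lemma requires all $t\ge d+1$ with no upper bound on $t$. The two escape routes you sketch are not carried out, and one of them rests on a false premise: two branch sets whose projections share a vertex of $G_0$ are \emph{not} automatically non-adjacent in the blowup (one of them may meet a second blob adjacent to the shared one), so ``harvesting'' such pairs does not give forbidden pairs. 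The paper's \cref{c:good} gets the quadratic bound $\frac{\eps^2}{400}d^{-\alpha}t^2$ by a different, maximality-based argument that avoids any partition into disjoint classes and any stratification over $t$ or over $\sum_v|\phi(v)|$: assuming few good pairs, let $Y$ be the $\Omega(\eps t)$ blobs of size $\le\ell$ each lying in at most $\frac{\eps}{20}d^{-\alpha}t$ good pairs, and let $Z\subseteq Y$ be a \emph{maximal} pairwise-disjoint subfamily containing at most $\frac12 d^{-\alpha}\binom{|Z|}{2}$ good pairs. Property $(\star)$ of $G_0$ forces $|Z|<s$, so (each blob meeting at most $\ell r$ others) all but $o(t)$ blobs of $Y$ are disjoint from every member of $Z$; by maximality each such blob forms good pairs with at least $\frac12 d^{-\alpha}|Z|$ members of $Z$, whence some member of $Z$ lies in more than $\frac12 d^{-\alpha}\cdot\frac{\eps}{10}t$ good pairs, contradicting its membership in $Y$. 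Without this (or an equivalent) argument your proof covers only a bounded range of $t$ and does not establish the lemma as stated.
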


\begin{proof}
Let $\ell := \sqrt{\alpha\log_b d}$. Choose $\beta\in(\alpha,1)$ and let $s:=\ceil{d^{\beta}}$.
We assume that $d$ is sufficiently large as a function of $\alpha,\beta$ and $\eps$ to satisfy the inequalities occurring throughout the proof. 


Let $G_0$ be the graph from \cref{NewRandomGraph} applied with $\frac{\eps}{4}$ in place of $\eps$.
Thus $|V(G_0)|=d$ and $|E(G_0)| >  (\frac12 \frac{\eps}{4})pd^2 = \frac12 (1-\frac{\eps}{2})pd^2$, and for every set $S$ of $s$ pairwise disjoint $\ell$-sets in $G_0$, more than $\frac12 d^{-\alpha} \binom{s}{2}$ pairs of $\ell$-sets in $S$ are non-adjacent. Call this property $(\star)$. 

Let $G$ be obtained from $G_0$ by replacing each vertex $x$ by an independent set $I_x$ of size 
\begin{equation*}
r:=\ceil*{\left(1 - \frac{\eps}{2}\right)\frac{t\ell}{d}}, 
\end{equation*} 
and replacing each edge $xy$ of $G_0$ by a complete bipartite graph between $I_x$ and $I_y$. Note that 
\begin{equation*}
|V(G)| = dr= d\left\lceil\left(1-\frac{\eps}{2}\right)\frac{t\ell}{d}\right\rceil < \left(1-\frac{\eps}{4}\right)\ell t,\end{equation*} 
and 
\begin{equation*}
|E(G)| = |E(G_0)|\, r^2 
>   \left(1-\frac{\eps}{2}\right)\frac{pr^2d^2}{2}  
= \left(1-\frac{\eps}{2}\right)\frac{prd}{2}|V(G)| 
\geq \left(1-\frac{\eps}{2}\right)^2\frac{p t\ell }{2}|V(G)| .
\end{equation*}
Hence $G$ has average degree 
$2\frac{|E(G)|}{|V(G)|} \geq (1 -\frac{\eps}{2})^2pt\ell \geq 
(1 -\eps)pt \sqrt{\alpha \log_b d}$, as claimed. 
It remains to show that almost every graph $H$ with $t$ vertices and average degree $d$ is not a minor of $G$.

A \emph{blob} is a non-empty subset of $V(G_0)$. 
A \emph{blobbing} $(B_1,B_2,\ldots,B_t)$ is an ordered sequence of $t$ blobs with total size at most $|V(G)|$, such that each vertex of $G_0$ is in at most $r$ blobs. 

The motivation for these definitions is as follows: 
Suppose that a graph $H$ is a minor of $G$ and $V(H)=\{1,2,\dots,t\}$. Then for each vertex $v$ of $H$ there is a set $X_v\subseteq V(G)$, such that $X_v\cap X_w=\emptyset$ for distinct $v,w\in V(H)$, and for every edge $vw$ of $H$, there is an edge in $G$ between $X_v$ and $X_w$. For each vertex $v$ of $H$, let $B_v:= \{ x\in V(G_0): X_v \cap I_x\neq\emptyset\}$, called the \emph{projection} of $X_v$ to $G_0$. Note that $\sum_v |B_v| \leq \sum_v |X_v| \leq |V(G)|$, and each vertex of $G_0$ is in at most $r$ of $B_1,B_2,\dots,B_t$. Thus $(B_1,B_2,\dots,B_t)$ is a blobbing. Also note that by the construction of $G$, if $B_v\cap B_w=\emptyset$, then 
there is an edge of $G$ between $X_v$ and $X_w$ if and only if there is an edge of $G_0$ between $B_v$ and $B_w$. 

\begin{claim}\label{c:blob}
The number of blobbings is at most $(4d)^{t\ell}$. 
\end{claim}

\begin{proof}
For positive integers $d,t$ and for each positive integer $n\geq d$, let $g(d,t,n)$ be the number of $t$-tuples $(X_1,X_2,\dots,X_t)$ such that $X_i$ is a non-empty subset of $\{1,2,\dots,d\}$ for all $i\in\{1,2,\dots,t\}$, and $\sum_{i=1}^t|X_i| \leq n$. Below we prove that $g(d,t,n) \leq (4d)^n$ by induction on $t$. The result follows, since the number of blobbings is at most $g(d,t,|V(G)|) \leq g(d,t, \floor{t\ell} )$.  

In the base case, $g(d,1,n) \leq 2^d \leq (4d)^n$, as desired. Now assume the claim for $t-1$. 
Observe that 
\begin{equation*}
g(d,t,n) 
= \sum_{i=1}^d \binom{d}{i} g(d,t-1,n-i) .
\end{equation*}
By induction,
\begin{equation*}
g(d,t,n) 
\leq \sum_{i=1}^d \binom{d}{i} (4d)^{n-i}
\leq \sum_{i=1}^d \left(\frac{ed}{i} \right)^i (4d)^{n-i}
= (4d)^{n} \sum_{i=1}^d \left(\frac{e}{4i} \right)^i  
< (4d)^{n}.\end{equation*}
This completes the proof. 
\end{proof}

Two blobs are a \emph{good pair} if they are disjoint and non-adjacent $\ell$-sets in $G_0$. 

\begin{claim}\label{c:good}
Every blobbing has at least $\frac{\eps^2}{400} d^{-\alpha}t^2$ good pairs. 
\end{claim}

\begin{proof}
Suppose for a contradiction that some blobbing $(B_1,B_2,\dots,B_t)$ has less than 
$\frac{\eps^2}{400} d^{-\alpha}t^2$ good pairs.  
Let $X$ be the set of blobs $B_i$ such that $|B_i| \leq \ell$. 
Then $ \ell(t-|X| )  < |V(G)| < \left(1-\frac{\eps}{4}\right)\ell t$,  implying $|X| >\frac{\eps}{4} t$. 
Let $Y$ be the set of blobs in $X$ that belong to at most $\frac{\eps}{20} d^{-\alpha}t$ good pairs.  
Thus the total number of good pairs is at least $\frac{\eps}{40} d^{-\alpha} t |X \setminus Y|$, implying that $|X \setminus Y| < \frac{\eps}{10} t$ and $|Y| > \frac{3\eps}{20} t$.
   Let $Z$ be a maximal subset of $Y$ such that the blobs in $Z$ are pairwise
   disjoint and  contain at most $\frac12 d^{-\alpha}\binom{|Z|}{2}$ good pairs. 
   Then $1\leq |Z|< s$  by property $(\star)$ of $G_0$. 
   Let $Z'$ be the set of blobs in $Y$ that are disjoint from every blob in $Z$. 
   Since each blob in $Z$ intersects at most $\ell r$ other blobs, 
   $|Y| \leq |Z'| + \ell r |Z| < |Z'| + \ell r s$, and $|Z'| >\frac{3\eps}{20} t - \ell r s \geq   \frac{\eps}{10} t$ for sufficiently large $d$. 
   By the maximality of $Z$,  every blob in $Z'$ is in a good pair with at least 
   $\frac12 d^{-\alpha}|Z|$ blobs in $Z$. 
   So in total there are at least
   $\frac12 d^{-\alpha}|Z||Z'|$ good pairs $\{B_i,B_j\}$ with $B_i \in Z$ and $B_j \in Z'$. 
   So some $B_i \in Z$ is in more than 
   $\frac12 d^{-\alpha}|Z'| > d^{-\alpha} \frac{\eps}{20} t$ good pairs, contradicting the definition of $Y$.
\end{proof}

Let $H$ be a graph with $V(H)=\{1,\ldots,t\}$. We say that a blobbing $(B_1,B_2,\ldots,B_t)$ is \emph{$H$-compatible} if for every $ij \in E(H)$ the blobs $B_i$ and $B_j$ intersect or are adjacent, implying that $\{B_i,B_j\}$ is not good. As explained above, if $H$ is a minor of $G$, then there exists an $H$-compatible blobbing. By \cref{c:good}, if $H\in\mathcal{G}(t,m)$ then the probability that a given blobbing is $H$-compatible is at most
\begin{equation*}
{\binom{\binom{t}{2}-\frac{\eps^2}{400}d^{-\alpha}t^2}{m}} / {\binom{\binom{t}{2}}{m}} 
\leq \left(1 -\frac{\eps^2d^{-\alpha}}{200}\right)^m 
\leq \exp\left(-\frac{\eps^2md^{-\alpha}}{200}\right).
\end{equation*}
Combining this inequality, \cref{c:blob} and the union bound, if $H\in\mathcal{G}(t,td/2)$ then 
\begin{equation*}
\mathbb{P}(H\text{ is a minor of }G) \leq 
(4d)^{t\ell} \exp\left(-\frac{\eps^2 td^{1 -\alpha}}{400}\right),
\end{equation*}
which is less than $c^t$ for sufficiently large $d$. 
\end{proof}

\begin{proof}[Proof of \cref{main}.]
Choose $x > 0$ so that $\lambda = \frac{1-e^{-x}}{ \sqrt{x}}$, let $b:=e^x$, $p:=1-e^{-x}$. 
Let  $\alpha := (\frac{1-\eps}{1-\eps/2})^2$, implying $(1-\frac{\eps}{2})\sqrt{\alpha}=1-\eps$. By \cref{TheLemma},  there exists $d_0$ such that for every integer $d\geq d_0$ and for every integer $t\geq d+1$,  there is a graph $G$ with average degree at least $(1 -\frac{\eps}{2})p t \sqrt{\alpha \log_b d}$ such that 
if $H\in\mathcal{G}(t,td/2)$ then $\mathbb{P}(H\text{ is a minor of }G) < c^t$.
Since  
\begin{equation*}\left(1 -\frac{\eps}{2}\right)p t \sqrt{\alpha \log_b d} 
= \left(1 -\frac{\eps}{2}\right)\sqrt{\alpha} \left( \frac{1-e^{-x}}{\sqrt{x}} \right) t\sqrt{\ln d}
=(1-\eps)\lambda t\sqrt{\ln d},\end{equation*}
the graph $G$ satisfies the conditions of the theorem.
\end{proof}


We finish with the natural open problem that arises from this work: Can the constant in the upper bound of \citet{ReedWood16} be improved to match the lower bound in the present paper? That is, is 
$f(H) \leq (\lambda + o(1) ) t\,\sqrt{\ln d}$ for every graph $H$ with $t$ vertices and average degree $d$? 

\subsection*{Note Added in Proof} 
Following the initial release of this paper, \citet{TW19} announced a solution to the above open problem. 

\subsection*{Acknowledgement} This research was partially completed at the Armenian Workshop  on Graphs, Combinatorics and Probability, June 2019. Many thanks to the other organisers and participants.


\def\soft#1{\leavevmode\setbox0=\hbox{h}\dimen7=\ht0\advance \dimen7
  by-1ex\relax\if t#1\relax\rlap{\raise.6\dimen7
  \hbox{\kern.3ex\char'47}}#1\relax\else\if T#1\relax
  \rlap{\raise.5\dimen7\hbox{\kern1.3ex\char'47}}#1\relax \else\if
  d#1\relax\rlap{\raise.5\dimen7\hbox{\kern.9ex \char'47}}#1\relax\else\if
  D#1\relax\rlap{\raise.5\dimen7 \hbox{\kern1.4ex\char'47}}#1\relax\else\if
  l#1\relax \rlap{\raise.5\dimen7\hbox{\kern.4ex\char'47}}#1\relax \else\if
  L#1\relax\rlap{\raise.5\dimen7\hbox{\kern.7ex
  \char'47}}#1\relax\else\message{accent \string\soft \space #1 not
  defined!}#1\relax\fi\fi\fi\fi\fi\fi}

\end{document}